\newtheorem{defi}{Definition}
\crefname{defi}{Definition}{}
\crefname{ass}{Assumption}{}
\newtheorem{ex}{Example}
\crefname{ex}{Example}{}
\newtheorem{lem}{Lemma}
\crefname{lem}{Lemma}{Lemmas}
\newtheorem{theo}{Theorem}
\crefname{theo}{Theorem}{Theorems}
\newtheorem{prop}{Proposition}
\crefname{prop}{proposition}{Propositions}
\newtheorem{rem}{Remark}
\crefname{rem}{Remarks}{}
\newtheorem{cor}{Corollary}
\crefname{cor}{Corollary}{}
\newcommand{\ep}{\varepsilon}
\newcommand{\la}{\Delta}
\renewcommand{\a}{\mathfrak{a}}
\newcommand{\A}{\mathcal{A}}
\newcommand{\Hil}{\mathcal{H}}
\newcommand{\N}{\mathbb{N}}
\renewcommand{\O}{\mathcal{O}}
\newcommand{\J}{\mathcal{J}}
\newcommand{\R}{\mathbb{R}}
\newcommand{\Z}{\mathbb{Z}}
\newcommand{\sq}{\square}
\newcommand{\sqi}{\sq_{i\ep}}
\newcommand{\Ki}{K_{i\ep}}
\newcommand{\Yi}{Y_{i\ep}}
\newcommand{\Hi}{H_{i\ep}}
\newcommand{\PI}{P_{i\ep}}
\newcommand{\QI}{Q_{i\ep}}
\newcommand{\di}{d_{i\ep}}
\newcommand{\Fi}{f_{i\ep}}
\newcommand{\ui}{u_{i\ep}}
\newcommand{\Vi}{V_{i\ep}}
\newcommand{\xI}{x_{i\ep}}
\newcommand{\chii}{\chi_{i\ep}}
\newcommand{\hchi}{\widehat{\chi}_{i\ep}}
\DeclareMathOperator{\capa}{cap}
\DeclareMathOperator{\diam}{diam}
\DeclareMathOperator{\dist}{dist}
\DeclareMathOperator{\spec}{spec}
\DeclareMathOperator{\Dom}{Dom}
\DeclareMathOperator{\supp}{supp}
\newcommand{\cls}[1]{\overline{#1}}
\newcommand{\pd}{\partial}
\begin{document}
\title[]{Convergence rate of Dirichlet Laplacians on domains with holes to the Schr\"{o}dinger operator with $L^p$ potential}
\author[H. Ishida]{Hiroto Ishida}
\address{Hiroto Ishida\\Graduate School of Science, University of Hyogo\\Shosha, Himeji, Hyogo 671-2201, Japan}
\email{immmrfff@gmail.com}
\subjclass[2020]{35B27, 47A10.}
\keywords{Homogenization, norm resolvent convergence, operator estimates}
\date{\today}
\maketitle
\begin{abstract}
We consider the Dirichlet Laplacian $\A_\ep=-\la$ in the domain $\Omega\setminus\bigcup_i \Ki\subset\R^n$ with holes $\Ki$ and the Schr\"{o}dinger operator $\A=-\la+V$ in $\Omega$ where $V$ is the $L^n(\Omega)$ limit of the density of the capacities $\capa(\Ki).$ Strong resolvent convergence for many $V\in W^{-1,\infty}(\Omega)$ was studied by the author. In this paper, we study about convergence rate for $\A_\ep\to\A$ in norm resolvent sense. The case for which $V$ is a constant is studied by Andrii Khrabustovskyi and Olaf Post.
\end{abstract}
\section{Introduction}
Let $\Omega\subset\R^n (n\geq 3)$ be a domain and $V\in L^n(\Omega,[0,\infty)).$ We consider the family of compact sets $\{\Ki\}_{i\in\Lambda_\ep}$ for each $\ep>0$ such that the density of the Newtonian capacities $\capa(\Ki)$ tends to $V$ in $L^n(\Omega)$ (see \Cref{fig} and \Cref{holes}). 
We let $\Omega_\ep=\Omega\setminus\bigcup_{i\in\Lambda_\ep}\Ki$ and consider the Dirichlet Laplacian $\A_\ep=-\la$ on $L^2(\Omega_\ep)$ and the Schr\"{o}dinger operator $\A=-\la+V$ on $L^2(\Omega).$ In this paper, we study about convergence rate of $\A_\ep\to\A$ in norm resolvent sense (see \Cref{NRC} for details for the statement).
Hereafter, we omit to denote "$\ep\to 0$" for convergence as $\ep\to 0.$
\subsection{Known results}
Resolvent convergence of Dirichlet Laplacian $\A_\ep$ on domains with holes is considered as homogenization problem for Poisson problem. The limit $V$ of the density of the capacities of the holes is often used to characterize the limit of $\A_\ep$ by $\A=-\la+V$ if each hole is a small compact set.
Convergence rate for norm resolvent convergence for $\A_\ep$ with holes is studied at \cite{CICE} when $V$ is a constant.
On the other hand, existence of the holes for which $\A_\ep\to\A$ in the strong resolvent sense for given $V\in W^{-1,\infty}(\Omega)$ is studied in \cite{INV}. \Cref{NRC} generalizes these results to norm resolvent convergence for $V\in L^n(\Omega).$

See \cite{KM,CM}, for homogenization for holes which may not be a union of compact sets.

This paper is organized as follows. We state the main result in \Cref{mainresu} under the assumptions in \Cref{secass}. We remark on properties of operators in \Cref{op}. Finally, we show the main result in \Cref{secconvrate} and \Cref{secproof}.
\section{Assumption and main results}\label{secass}
\subsection{Assumptions for the domain}
We assume $\Omega$ is bounded and $\pd\Omega$ is $C^2$ boundary. We also assume there exists $\theta>0$ such that the map $\pd\Omega\cross[0,\theta]\ni(x,t)\mapsto x+t\nu(x)\in\Omega$ is injective, where $\nu(x)$ is the unit-inward-pointing normal vector at $x\in\pd\Omega.$
\begin{rem}
These assumptions are required by \eqref{ellreg}(elliptic regularity),\cite[Lemma 4.7.]{CICE} and \Cref{compact} only. See \cite[Remark 4.8.]{CICE} to relax these assumptions.
\end{rem}
\subsection{Assumptions and construction of holes}\label{holes}
\begin{figure}\centering
\begin{overpic}[width=10cm]{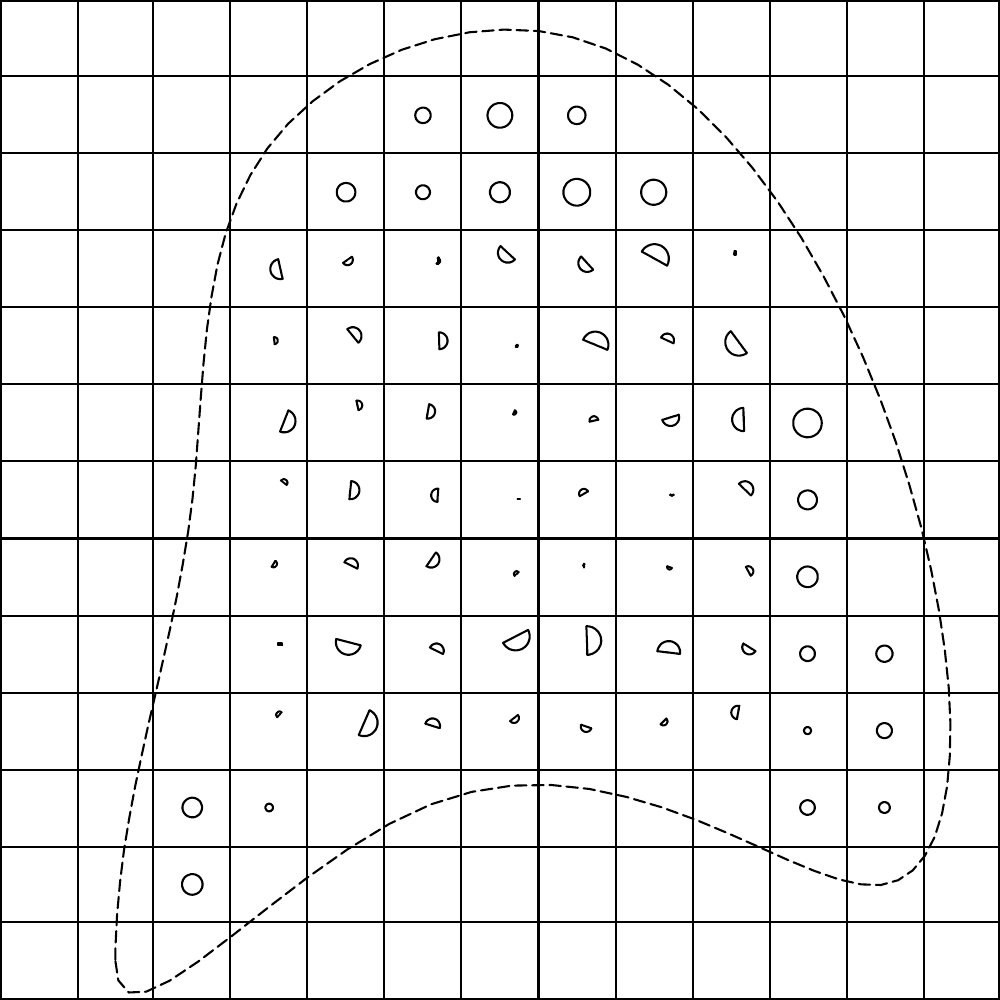}
\end{overpic}
\caption{A domains perforated by holes\label{fig}}
\end{figure}
Let $\sq=(-1/2,1/2)^n$ and $\sqi=\ep(\sq+i)$ for $\ep>0$ and $i\in\Z^n.$
We let
\[\Lambda^\ep=\{i\in\Z^n\mid \sqi\subset\Omega\}.\]
We assume $\Ki=\emptyset$ or $\pd\Ki$ is piecewise $C^1$ ("for each $0<\ep\ll1$ and $i\in\Lambda^\ep$" is omitted in this paper).
We recall definition of the Newtonian capacity. We define $\capa(\Ki)=\norm{\grad \Hi}_{L^2(\R^d)}^2,$ where $\Hi\in C(\R^n)$ is the solution to
\begin{gather}
\label{harmonic}
\la\Hi=0\mbox{ on }\R^n\setminus \Ki,\\
\nonumber
\Hi=1\mbox{ on }\Ki,\\
\nonumber
\Hi(x)\to0~(|x|\to\infty).
\end{gather}
We denote $B(x,r)=\{y\in\R^n\mid|x-y|<r\}$ for $x\in\R^n$ and $r>0.$ Let $\cls{B(\xI,\di)}$ be a smallest closed ball such that $\Ki\subset\cls{B(\xI,\di)}$ ($\di=0$ if 
 $\Ki=\emptyset$) and assume
\begin{equation}\label{asscap}
\capa(\Ki)=\int_{\sqi}V(x)dx.
\end{equation}
We assume that there exist $C,\kappa>0$ such that
\begin{equation}\label{assrad}
\di^{n-2}\leq C\capa(\Ki)
\end{equation}
and
\begin{equation}\label{assdist}
\dist(\cls{B(\xI,\di)},\pd\sqi)\geq\kappa\ep.
\end{equation}
We let
\[\Omega_\ep=\Omega\setminus\bigcup_{i\in\Lambda^\ep}\Ki.\]
We introduce a simple example satisfying these assumptions.
\begin{ex}
$K=\cls{B(0,1)}$ and $\Ki=\cls{B\qty(\ep i,(\frac{\int_{\sqi}V(x)dx}{\capa(K)})^\frac{1}{n-2})}$ satisfy the assumptions.
\begin{proof}
We verify \eqref{assdist} with $\xI=\ep i,~\di=(\frac{\int_{\sqi}V(x)dx}{\capa(K)})^\frac{1}{n-2}.$
Since $\capa(\cls{B(x,r)})=\capa(K)r^{n-1}$ for each $x\in\R^n$ and $r>0,$ we can verify \eqref{asscap} and \eqref{assrad}. 
H\"{o}lder's inequality implies $\capa(\Ki)\leq\norm{V}_{L^n(\Omega)}\ep^{n-1}.$ Therefore, we have $\sup_i\di/\ep\to 0.$ It implies \eqref{assdist}.
\end{proof}
\end{ex}
We define sesquilinear(quadratic) forms $\a_\ep$ on $L^2(\Omega_\ep)$ and $\a$ on $L^2(\Omega)$ by
\begin{align*}
\a_\ep(u,v)&=(\grad u,\grad v)_{L^2(\Omega_\ep)}~&(u,v\in\Dom(\a_\ep)&=H_0^1(\Omega_\ep))\\
\mbox{and}\\
\a(u,v)&=(\grad u,\grad v)_{L^2(\Omega)}+\int_\Omega u\cls{v}Vdx~&(u,v\in\Dom(\a)&=H_0^1(\Omega)).\end{align*}
We let $\A_\ep$ and $\A$ be self-adjoint operators associated with $\a_\ep$ and $\a$, respectively. We remark about definition of them at \Cref{op}.
\subsection{Main result}\label{mainresu}
We denote Lebesgue measure of $E\subset\R^n$ by $|E|.$ We denote
\[\Fi=\frac{\int_{\sqi}f(x)dx}{|\sqi|}~(f\in L^1(\sqi)\mbox{ or the zero extension of }f\in L^1(\sqi\setminus\Ki)).\]
For $V\in L^p(\Omega)~(n\leq p\leq\infty)$ and $0<\beta<1/2,$ let
\begin{gather*}
\gamma_n=\begin{cases}
1/2&(n=3)\\
1-\beta&(n=4)\\
1&(n\geq5)
\end{cases},
~e_\ep=\ep^\frac{(2-n/p)\gamma_n}{n-2}+\ep^{1-n/p},\\
b_\ep'=\sup_{\substack{E\neq\emptyset:\mbox{Borel set}\\\diam E\leq\diam\sq_{0\ep}}}\norm{V}_{L^p(E)},~b_\ep=(b_\ep')^\frac{1}{2(n-2)}+\ep^{n/p}\\
\mbox{and}\\
D_\ep=\sum_{i\in\Lambda^\ep}\norm{\Vi-V}_{L^n(\sqi)},
\end{gather*}
where $1/p=0$ if $p=\infty.$
\begin{prop}\label{convrate}
We have
\begin{gather}
\label{ldc}
D_\ep=\begin{cases}
\O(\ep)&(V\in W^{1,n}(\Omega))\\
\O(\ep^\alpha)&(V\in C^{0,\alpha}(\Omega),~\alpha>0)\\
o(1)&(V\in L^n(\Omega))
\end{cases}\\
\nonumber\mbox{and}\\
\label{intsmalldiam}
b_\ep=\begin{cases}
o(1)&(n\leq p<\infty)\\
\O(1)&(p=\infty)
\end{cases}.
\end{gather}
\end{prop}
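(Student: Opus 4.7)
The plan is to treat \eqref{ldc} and \eqref{intsmalldiam} separately; each reduces to a standard estimate once the geometry $|\Lambda^\ep|\ep^n\leq |\Omega|$ and the disjointness of $\{\sqi\}_{i\in\Lambda^\ep}$ are exploited.

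For \eqref{ldc}, the key tool is the scaled Poincar\'e--Wirtinger inequality on each cube of side $\ep$: rescaling $\sqi$ to the unit cube gives
\[
\|V-\Vi\|_{L^n(\sqi)}\leq C\ep\,\|\nabla V\|_{L^n(\sqi)},\qquad i\in\Lambda^\ep,
\]
with $C$ independent of $i$ and $\ep$. For $V\in W^{1,n}(\Omega)$ I raise this to the $n$-th power, sum over $i$, and use disjointness to bound $\sum_i\|\nabla V\|^n_{L^n(\sqi)}\leq\|\nabla V\|^n_{L^n(\Omega)}$, giving rate $\O(\ep)$. For $V\in C^{0,\alpha}(\Omega)$, Poincar\'e is replaced by the pointwise H\"older estimate
\[
|V(x)-\Vi|\leq \frac{1}{|\sqi|}\int_{\sqi}|V(x)-V(y)|\,dy\leq\|V\|_{C^{0,\alpha}}(\sqrt n\,\ep)^\alpha,\qquad x\in\sqi,
\]
after which the summation uses $|\Lambda^\ep|\ep^n\leq |\Omega|$.

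For general $V\in L^n(\Omega)$ no quantitative rate is available, so I run an approximation argument: given $\delta>0$, choose $V_\delta\in C^1(\cls{\Omega})$ with $\|V-V_\delta\|_{L^n(\Omega)}<\delta$, and split the quantity defining $D_\ep$ by sublinearity. The smooth part is $\O(\ep)$ by the $W^{1,n}$ case and so tends to $0$. The remainder $V-V_\delta$ is controlled by the elementary continuity bound $\sum_i\|\Fi-f\|^n_{L^n(\sqi)}\leq C\|f\|^n_{L^n(\Omega)}$, which itself follows from Jensen's inequality $\|\Fi\|_{L^n(\sqi)}\leq\|f\|_{L^n(\sqi)}$ and the triangle inequality applied cube-by-cube. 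Sending first $\ep\to 0$ then $\delta\to 0$ gives $D_\ep\to 0$.

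For \eqref{intsmalldiam}, the case $p=\infty$ is immediate: $\ep^{n/p}=1$ and $b_\ep'\leq\|V\|_{L^\infty(\Omega)}$, so $b_\ep=\O(1)$. For $n\leq p<\infty$, the isodiametric inequality gives $|E|\leq c_n\ep^n\to 0$ for every admissible Borel set $E$, so absolute continuity of the integral of $|V|^p\in L^1(\Omega)$ yields $b_\ep'\to 0$ uniformly in $E$, while $\ep^{n/p}\to 0$ is trivial.

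The only nontrivial step is the $L^n$ case of \eqref{ldc}: the absence of any quantitative modulus of continuity forces the density argument above. Every other case reduces to a single Poincar\'e--Wirtinger or H\"older estimate on $\sqi$ combined with $|\Lambda^\ep|\ep^n\leq|\Omega|$.
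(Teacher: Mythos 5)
Your proposal is correct, and for \eqref{ldc} it is essentially the paper's own argument: the scaled Poincar\'e--Wirtinger inequality on $\sqi$ (the paper's \Cref{wirt} with $q=n$) for $V\in W^{1,n}(\Omega)$, the pointwise bound $\abs{\Vi-V(x)}\leq|V|_{C^{0,\alpha}}(\diam\sq_{0\ep})^\alpha$ for the H\"older case, and for $V\in L^n(\Omega)$ the same two-parameter approximation argument (the paper uses Lipschitz approximants $f_\delta$ together with \eqref{ldclip} at $\alpha=1$ and the Jensen-type bound $\sum_i\norm{(V-f_\delta)_{i\ep}}_{L^n(\sqi)}^n\leq\norm{V-f_\delta}_{L^n(\Omega)}^n$, which is exactly your ``elementary continuity bound''), with the same order of limits, $\ep\to0$ first and then $\delta\to0$. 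Note that, exactly as in the paper's own computations, what you actually bound is the combination $\qty(\sum_i\norm{\Vi-V}_{L^n(\sqi)}^n)^{1/n}=\norm{\sum_i(\Vi-V)1_{\sqi}}_{L^n(\Omega)}$; this is evidently the quantity the definition of $D_\ep$ is intended to denote, so your treatment is aligned with the paper's. For \eqref{intsmalldiam} you take a genuinely different, and more elementary, route: the paper first proves an auxiliary lemma stating that an atomless finite Borel measure assigns uniformly small mass to Borel subsets of small diameter (proved by a subsequence/compactness argument) and then applies it to $\mu(E)=\int_{\Omega\cap E}|V|^p\,dx$; you instead note $|E|\lesssim(\diam E)^n\to0$ and invoke absolute continuity of the integral of $|V|^p\in L^1(\Omega)$, which gives the uniform smallness of $b_\ep'$ directly because this $\mu$ is absolutely continuous with respect to Lebesgue measure. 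Your version is shorter and avoids proving the auxiliary lemma; the paper's lemma is marginally more general (arbitrary atomless measures), but that extra generality is not used. The $p=\infty$ case is trivial in both treatments.
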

\begin{rem}
We have $D_\ep+b_\ep e_\ep=o(1)$ even for $p=n.$
$\Vi=\capa(\Ki)/|\sqi|$ is the density of capacity $\capa(\Ki)$ by \eqref{asscap}. It means $V$ is the $L^n$ limit of the density of the capacities $\capa(\Ki).$
\end{rem}
We let $J_\ep\colon L^2(\Omega)\to L^2(\Omega_\ep)$ be the restriction operator and $J_\ep'\colon L^2(\Omega_\ep)\to L^2(\Omega)$ be the zero extension operator.
Now we state our main result.
\begin{theo}\label{deltacl}
$(L^2(\Omega),\a)$ and $(L^2(\Omega_\ep),\a_\ep)$ are $C(D_\ep+b_\ep e_\ep)$-close of order $2$ with respect to $J_\ep,~J_\ep',~J_\ep^1,~J_\ep^{1'}$ (see \Cref{app} in \Cref{appsec}, or \cite[Definition 3.1]{CICE}) for some $J_\ep^1,~J_\ep^{1'}$ and $C>0$ for $\ep\ll1.$
\end{theo}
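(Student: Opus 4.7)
The plan is to verify the $\delta$-closeness of order $2$ from \Cref{app} with $\delta=C(D_\ep+b_\ep e_\ep)$, following the template of Khrabustovskyi--Post in \cite{CICE} but upgrading the form-difference step to accommodate $L^p$ potentials. For the identification between form domains I take $J_\ep^{1'}\colon H_0^1(\Omega_\ep)\to H_0^1(\Omega)$ to be the zero extension, which is well defined because of the Dirichlet condition on $\pd\Ki$. For the other direction I build $J_\ep^1\colon H_0^1(\Omega)\to H_0^1(\Omega_\ep)$ out of a cutoff constructed from the capacity potential $\Hi$ of \eqref{harmonic}: truncate $\Hi$ at some radius $r_\ep$ with $\di\ll r_\ep\leq\kappa\ep/2$ to obtain $\chii\in H^1(\R^n)$ that equals $1$ on $\Ki$, vanishes outside $B(\xI,r_\ep)$, and is harmonic in between. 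Set $\hchi=\sum_i\chii$ and define
\[J_\ep^1 u=(1-\hchi)u+\sum_i\chii\,\ui,\]
where $\ui$ denotes the average of $u$ on a thin shell around $\xI$. The corrector $\chii\ui$ both makes $J_\ep^1 u$ vanish on $\Ki$ and sharpens the subsequent Poincaré-type estimates.

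Among the $\delta$-closeness inequalities, the norm bounds on $J_\ep,J_\ep'$ and the $L^2$-level comparisons $\norm{J_\ep u-J_\ep^1 u}_{L^2(\Omega_\ep)}$ and $\norm{J_\ep' v-J_\ep^{1'}v}_{L^2(\Omega)}$ reduce to estimating the $L^2$ mass of $u,v$ on the small balls $B(\xI,r_\ep)$, which is handled by Hölder together with $H^1\hookrightarrow L^{2n/(n-2)}$ and absorbed into $e_\ep$. The core inequality is the form difference
\[|\a_\ep(J_\ep^1 u,J_\ep v)-\a(u,J_\ep^{1'}v)|\leq C(D_\ep+b_\ep e_\ep)\sqrt{\a(u,u)\,\a_\ep(v,v)}.\]
Expanding the left-hand side and using $\la\chii=0$ on $B(\xI,r_\ep)\setminus\Ki$, integration by parts converts $\int u\,\grad\chii\cdot\grad\bar v$ into a surface integral on $\pd\Ki$ which, by the definition of capacity and by \eqref{asscap}, produces exactly $\capa(\Ki)\ui\overline{v_{i\ep}}=\Vi|\sqi|\ui\overline{v_{i\ep}}$. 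Summing over $i$ reproduces $\int V u\bar v$ modulo two errors: (i) the difference between $V$ and its cell averages $\Vi$, bounded by a $D_\ep$ multiple of the form norms via Hölder and Sobolev; (ii) the replacement of $u,v$ by their shell averages $\ui,v_{i\ep}$, bounded using Poincaré on the shell together with the local $L^p$ mass of $V$, which produces the $b_\ep e_\ep$ factor.

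The main obstacle is error (ii). In the constant-$V$ case of \cite{CICE} one simply pulls $V$ out of the integral, but here, for $V\in L^p(\Omega)$ with $n\leq p<\infty$, one must combine Hölder's inequality with the local $L^p$ smallness $b_\ep'\to0$ (which holds by absolute continuity of the integral). The loss incurred by Hölder depends on the best Sobolev exponent available for $u-\ui$ and $v-v_{i\ep}$ on the shell: for $n\geq 5$ the embedding $H^1\hookrightarrow L^{2n/(n-2)}$ yields the full power $\ep^{(2-n/p)\gamma_n/(n-2)}$ with $\gamma_n=1$; for $n=3$ only $H^1\hookrightarrow L^6$ is available and one loses a square root, forcing $\gamma_n=1/2$; the borderline case $n=4$ costs the $\beta$-loss $\gamma_n=1-\beta$ coming from a Moser--Trudinger-type substitute. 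With these exponents the form difference is controlled by $C(D_\ep+b_\ep e_\ep)$ times the form norms, and the remaining $\delta$-closeness inequalities go through essentially as in \cite{CICE} since they do not interact with $V$, establishing the theorem.
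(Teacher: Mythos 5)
There is a genuine gap, and it sits at the heart of your construction. With your definitions (your $\chii$ is the truncated capacitary potential, equal to $1$ on $\Ki$, and $\hchi=\sum_i\chii$), the map $J_\ep^1u=(1-\hchi)u+\sum_i\chii\ui$ equals $\ui$ on $\pd\Ki$, not $0$: the part $(1-\hchi)u$ does vanish there, but the corrector you add restores the value $\ui$. Hence $J_\ep^1u\notin H_0^1(\Omega_\ep)=\Dom(\a_\ep)$, so it is not an admissible $J_\ep^1$ for \Cref{app}. Worse, the same corrector destroys the mechanism that should create the potential: since $\grad J_\ep^1u=(1-\hchi)\grad u-\sum_i(u-\ui)\grad\chii$, your flux/integration-by-parts argument produces the capacitary surface term with density $(u-\ui)\bar v$, whose sphere average is small; equivalently, the contribution $\approx\capa(\Ki)\,\ui\,\overline{v_{i\ep}}$ coming from $(1-\hchi)u$ is exactly cancelled by the contribution of $+\chii\ui$. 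Then nothing in $\a_\ep(J_\ep^1u,v)$ matches $\int_\Omega Vu\bar v\,dx$ inside $\a(u,J_\ep^{1'}v)$, and the form difference remains of order $1$ rather than $\O(D_\ep+b_\ep e_\ep)$. (Also, the surviving surface integral lives on the outer sphere $\pd B(\xI,r_\ep)$, not on $\pd\Ki$, where $v$ vanishes.) These two defects are one and the same phenomenon: the potential $V$ arises precisely from the Dirichlet energy needed to force the local mean of $u$ down to $0$ across the capacitary layer, and your operator never pays that cost.

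The repair is essentially the paper's choice of corrector with the opposite role: $J_\ep^1f=f-\sum_i\bigl((f-\Fi)\chii+\Fi\Hi\hchi\bigr)$ (cutoffs at scales $\di$ and $\ep$), which vanishes on $\Ki$ and whose capacitary piece $\Fi\Hi\hchi$ carries the cell mean $\Fi$, so the flux computation yields $\sum_i\capa(\Ki)\Fi\overline{u_{i\ep}}$, matched against $\int Vf\bar u$ through \eqref{asscap}; the mismatch is then controlled by $D_\ep$ (cell averages of $V$ versus $V$) plus $b_\ep e_\ep$ (averaging/shell errors, where \eqref{ordcap}, i.e.\ $\di^{n-2}\lesssim b_\ep'\ep^{n-n/p}$, enters --- so, contrary to your closing remark, \eqref{c1a} and \eqref{c4a} also interact with $V$ via \eqref{asscap}). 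A further point: you assert the form-difference bound with only $\sqrt{\a(u,u)\,\a_\ep(v,v)}$ on the right; condition \eqref{c5} only requires $\norm{(\A+1)f}_{L^2(\Omega)}$, and the paper genuinely uses this weaker form, exploiting $\Dom(\A)=H_0^1(\Omega)\cap H^2(\Omega)$ and elliptic regularity (\Cref{ellregLd}) in the estimates \eqref{j1pre} and \eqref{j2pre}; your stronger $H^1$-level estimate is not justified as stated.
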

We pick up important results given by \Cref{deltacl}.
\begin{theo}[Norm resolvent convergence {\cite[Corollary 3.8.]{CICE}}]\label{NRC}
\[\norm{J_\ep'(\A_\ep+1)^{-1}J_\ep-(\A+1)^{-1}}_{L^2(\Omega)\to L^2(\Omega)}=\O(D_\ep+b_\ep e_\ep).\]
\end{theo}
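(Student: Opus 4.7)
The plan is to obtain \Cref{NRC} as an immediate corollary of \Cref{deltacl} via the abstract framework of \cite[Section~3]{CICE}. Once we know that the form pair $((L^2(\Omega),\a),(L^2(\Omega_\ep),\a_\ep))$ is $C(D_\ep+b_\ep e_\ep)$-close of order $2$ with respect to the identification maps $J_\ep,J_\ep',J_\ep^1,J_\ep^{1'}$ furnished by \Cref{deltacl}, we simply feed this value of $\delta$ into \cite[Corollary~3.8]{CICE}, which packages ``$\delta$-closeness of order~$2$'' into an $\O(\delta)$ bound on the sandwiched difference of first-order resolvents. No estimate specific to the perforated geometry enters at this step.

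The quasi-unitary equivalence framework of \cite{CICE} (in the tradition of Post) is designed precisely to handle operators acting on different Hilbert spaces $L^2(\Omega)$ and $L^2(\Omega_\ep)$: the notion ``order~$2$'' quantifies commutation defects between the $J_\ep^{(\prime)}$'s and the square roots $(\A+1)^{1/2},(\A_\ep+1)^{1/2}$, while the ``$J_\ep^1$''-pair does the same one level up in the scale of form domains, so that the two levels together can absorb a full power of the resolvent. The passage from those bounds to the claimed $L^2\to L^2$ estimate is essentially an exercise with the spectral calculus and is carried out once and for all in \cite[Corollary~3.8]{CICE}; invoking it here is mechanical.

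Consequently, the genuine work lies in \Cref{deltacl} rather than in \Cref{NRC}, and the main obstacle will arise there: one must construct $J_\ep^1,J_\ep^{1'}$ from a corrector built out of the capacitary potentials $\Hi$ of \eqref{harmonic} (suitably cut off inside each cell $\sqi$) so that the Dirichlet condition on $\Ki$ is transmuted into the potential term $V$ in the limit, and then verify all four $\delta$-closeness axioms of \Cref{app}. I expect the rate $D_\ep$ to emerge from replacing $V$ by the cell averages $\Vi$ when comparing $\a$ with its perforated counterpart, while $b_\ep e_\ep$ will track the residual capacitary error, controlled via the local $L^p$-mass $b_\ep'$ of $V$ on balls of diameter $\diam\sq_{0\ep}$ together with the dimension-dependent Sobolev scaling encoded in $\gamma_n$; these two sources should combine additively to the stated rate.
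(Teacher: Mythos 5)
Your proposal matches the paper's route exactly: \Cref{NRC} is obtained by applying the abstract result \cite[Corollary 3.8]{CICE} to the $C(D_\ep+b_\ep e_\ep)$-closeness of order $2$ established in \Cref{deltacl}, with no additional geometric input at this step. Your remarks about where $D_\ep$ and $b_\ep e_\ep$ arise concern the proof of \Cref{deltacl} and are consistent with what the paper does there.
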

\begin{theo}\label{specconv}
$\spec(\A_\ep)$ and $\spec(\A)$ are discrete and consist of eigenvalues with finite multiplicity. 
Let $\lambda_{k,\ep}$ and $\lambda_k$ be $k$-th eigenvalue of $\A_\ep$ and $\A$ arranged in the ascending order and repeated according to their multiplicities. Then, we have
\[\sup_{k\in\N}\abs{(\lambda_{k,\ep}+1)^{-1}-(\lambda_k+1)^{-1}}
=\O(D_\ep+b_\ep e_\ep).\]
\end{theo}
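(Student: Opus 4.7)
The statement has two parts: discreteness of $\spec(\A_\ep)$ and $\spec(\A)$, and the uniform eigenvalue estimate. I would handle them in this order.

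\textbf{Discreteness.} Since $\Omega$ is bounded, the Rellich--Kondrachov theorem gives a compact embedding $H_0^1(\Omega)\hookrightarrow L^2(\Omega)$; extending by zero yields the compact embedding $H_0^1(\Omega_\ep)\hookrightarrow L^2(\Omega_\ep)$. Because the form domains of $\A$ and $\A_\ep$ are precisely $H_0^1(\Omega)$ and $H_0^1(\Omega_\ep)$ (with $V\geq0$), the resolvents $(\A+1)^{-1}$ and $(\A_\ep+1)^{-1}$ are bounded from $L^2$ into the respective form domains and therefore compact as operators on $L^2$. Self-adjointness and compactness of the resolvent yield purely discrete spectra consisting of eigenvalues of finite multiplicity. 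As $\A,\A_\ep\geq 0$, the eigenvalues $(\lambda_k+1)^{-1}$ and $(\lambda_{k,\ep}+1)^{-1}$ lie in $(0,1]$ and accumulate only at $0$, indexed by $k\in\N$ in decreasing order.

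\textbf{Eigenvalue estimate.} The key observation is that $R_\ep:=J_\ep'(\A_\ep+1)^{-1}J_\ep$ is a compact self-adjoint operator on $L^2(\Omega)$ whose nonzero eigenvalues coincide with $(\lambda_{k,\ep}+1)^{-1}$: indeed $J_\ep$ restricts to the subspace $L^2(\Omega_\ep)\subset L^2(\Omega)$, $(\A_\ep+1)^{-1}$ acts there, and $J_\ep'$ puts the result back, while $R_\ep$ vanishes on the orthogonal complement $L^2(\Omega\setminus\Omega_\ep)$. So the positive eigenvalues of $R_\ep$ arranged in decreasing order are exactly $(\lambda_{k,\ep}+1)^{-1}$. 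Likewise the eigenvalues of $R:=(\A+1)^{-1}$ in decreasing order are $(\lambda_k+1)^{-1}$.

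\textbf{Conclusion via min--max.} The min--max principle for compact self-adjoint operators gives the Weyl-type bound $\sup_{k\in\N}|\mu_k(R_\ep)-\mu_k(R)|\leq\|R_\ep-R\|$ where $\mu_k$ denotes the $k$-th eigenvalue in decreasing order (possibly padded by zeros). By \Cref{NRC} we have $\|R_\ep-R\|_{L^2(\Omega)\to L^2(\Omega)}=\O(D_\ep+b_\ep e_\ep)$, so
\[\sup_{k\in\N}\abs{(\lambda_{k,\ep}+1)^{-1}-(\lambda_k+1)^{-1}}=\O(D_\ep+b_\ep e_\ep),\]
as required.

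\textbf{Main obstacle.} The only non-routine point is ensuring that the eigenvalue ordering of $R_\ep$ on $L^2(\Omega)$ matches that of $(\A_\ep+1)^{-1}$ on $L^2(\Omega_\ep)$; this is clean because $R_\ep$ is block-diagonal with a zero block on $L^2(\Omega\setminus\Omega_\ep)$, and zero is a limit point of $\spec((\A_\ep+1)^{-1})$ anyway, so inserting an extra zero eigenvalue does not shift the positive part of the spectrum.
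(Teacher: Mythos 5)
Your proof is correct, but it takes a genuinely different route from the paper. The paper does not use Weyl's inequality directly: its proof of \Cref{specconv} first verifies an extra condition \eqref{a1}, namely $\norm{J_\ep f}_{L^2(\Omega_\ep)}\to\norm{f}_{L^2(\Omega)}$ for each fixed $f$ (controlling $\norm{f}_{L^2(\bigcup_i\Ki)}$ via \eqref{spseq} and dominated convergence), recalls compactness of $\A^{-1}$ (\Cref{compact}), and then invokes the abstract spectral-convergence argument of \cite[4.4, proof of Theorem 2.7]{CICE}, which extracts the eigenvalue bound from the $\delta_\ep$-closeness of \Cref{deltacl} together with these two ingredients; that machinery is built for identification operators that are only quasi-unitary. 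You instead exploit the exact structure available in this concrete setting: $J_\ep^*=J_\ep'$ and $J_\ep J_\ep'=\mathrm{id}_{L^2(\Omega_\ep)}$, so $R_\ep=J_\ep'(\A_\ep+1)^{-1}J_\ep$ is the orthogonal direct sum of $(\A_\ep+1)^{-1}$ on the zero-extension subspace and $0$ on $L^2(\bigcup_i\Ki)$; since $(\A_\ep+1)^{-1}$ has infinitely many positive eigenvalues accumulating only at $0$, the decreasing min--max values of $R_\ep$ are exactly $(\lambda_{k,\ep}+1)^{-1}$, and the min--max (Weyl) perturbation bound combined with \Cref{NRC} yields the estimate without ever needing \eqref{a1}. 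Your argument is shorter and self-contained here (and your discreteness step via Rellich and compact resolvents matches the paper's in substance); the paper's route buys generality, since it stays within the abstract framework and would survive identification maps that are not exact co-isometries, at the cost of the additional verification of \eqref{a1}. One point worth stating explicitly if you write this up: the justification that padding with the zero block does not perturb the enumeration is precisely that $(\A_\ep+1)^{-1}$ has infinitely many strictly positive eigenvalues, so the decreasing enumeration of $\spec(R_\ep)$ never reaches the kernel contribution.
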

\Cref{deltacl} also implies convergence for a function of $\A_\ep$ (see \cite[Theorem 3.7, 3.9]{CICE}, \cite[Appendix]{POST}).
\subsection{Outline of the proof}
The method of the proof of \Cref{deltacl} is similar with the proof of the results in \cite{CICE}. However, we need to change the method of the proof of \eqref{c5}. We use the fact that $\Dom(\A)$ has a property similar with elliptic regularity (\Cref{ellregLd}). Moreover, we arrange the method of estimation of $J_{\ep,2}$ in \cite{CICE} as seen in \eqref{I1} since $\capa(\Ki)$ depends on $i.$
\section{Remarks on operators}\label{op}
We need the lemma below to define $\A_\ep$ and $\A$ as self-operators associated with forms $\a_\ep$ and $\a.$ 
In this section, we denote $a(f,u)\lesssim b(f,u)$ if there exists $C>0$ which only depends on $\Omega$ and $V$ such that $a(f,u) \leq C b(f,u)$ for $a(f,u),~b(f,u)\in\R.$
\begin{lem}
$\a$ is a inner product on $\Dom(\a)$ which defines the norm equivalent with the norm of $H_0^1(\Omega).$ In particular, $\a$ is a closed symmetric form.
\begin{proof}
Sobolev's embedding theorem $W_0^{1,1}(\Omega)\hookrightarrow L^{(1-1/n)^{-1}}(\Omega)$ implies
\begin{equation}\label{w-1inf}
V\in L^n(\Omega)\hookrightarrow W^{-1,\infty}(\Omega).
\end{equation}
Let $u\in H_0^1(\Omega).$
Poincar\'{e}'s inequality and Schwarz's inequality implies $|\a(u,u)|\leq\norm{\grad u}_{L^2(\Omega)}^2+\norm{V}_{W^{-1,\infty}(\Omega)}$
$\norm{|u|^2}_{W_0^{1,1}(\Omega)}$
$\lesssim\norm{u}_{H_0^1(\Omega)}^2+\norm{u\grad u}_{L^1(\Omega)}\lesssim\norm{u}_{H_0^1(\Omega)}^2\lesssim\a(u,u).$
Therefore, the assertion follows.
\end{proof}
\end{lem}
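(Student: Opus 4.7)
My plan is to reduce everything to establishing the norm equivalence $\a(u,u)\sim\norm{u}_{H_0^1(\Omega)}^2$ on $\Dom(\a)=H_0^1(\Omega)$; once that is in hand, sesquilinearity and symmetry (using that $V$ is real-valued and non-negative) are immediate from the definition, and closedness follows because $(H_0^1(\Omega),\norm{\cdot}_{H_0^1})$ is already complete.

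For the upper bound, the crux is controlling the potential term $\int_\Omega|u|^2 V\,dx$. The hypothesis $V\in L^n(\Omega)$ with $n\geq 3$ pairs naturally with $H_0^1(\Omega)$ via Sobolev. Concretely, I would apply Sobolev's embedding $H_0^1(\Omega)\hookrightarrow L^{2n/(n-2)}(\Omega)$ together with H\"older's inequality (exponents $n$ and $n/(n-2)$) to get
\[
\int_\Omega|u|^2 V\,dx\leq\norm{V}_{L^n(\Omega)}\norm{u}_{L^{2n/(n-2)}(\Omega)}^2\lesssim\norm{u}_{H_0^1(\Omega)}^2,
\]
which, combined with $\norm{\grad u}_{L^2}^2\leq\norm{u}_{H_0^1}^2$, yields $\a(u,u)\lesssim\norm{u}_{H_0^1(\Omega)}^2$. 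An equivalent route, more in the spirit of \eqref{w-1inf}, is to factor through $L^n(\Omega)\hookrightarrow W^{-1,\infty}(\Omega)$ (dual to $W_0^{1,1}(\Omega)\hookrightarrow L^{n/(n-1)}(\Omega)$), interpret the integral as the pairing $\langle V,|u|^2\rangle$, and bound $\norm{|u|^2}_{W_0^{1,1}(\Omega)}\lesssim\norm{u\grad u}_{L^1(\Omega)}\leq\norm{u}_{L^2}\norm{\grad u}_{L^2}$.

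For the lower bound I would use the positivity $V\geq 0$ to drop the potential term, and then invoke Poincar\'e's inequality (available since $\Omega$ is bounded) to conclude $\a(u,u)\geq\norm{\grad u}_{L^2(\Omega)}^2\gtrsim\norm{u}_{H_0^1(\Omega)}^2$. The only slightly delicate point, which I would address first, is well-definedness of $\int_\Omega u\cls{v}V\,dx$ for $u,v\in H_0^1(\Omega)$; this is handled by exactly the same H\"older/Sobolev estimate used in the upper bound. There is no real obstacle here: the lemma is a short preliminary whose sole purpose is to justify defining $\A$ as the self-adjoint operator associated with $\a$ via the Kato representation theorem.
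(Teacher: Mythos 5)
Your argument is correct and, in substance, matches the paper's: both reduce the lemma to the two-sided bound $\norm{u}_{H_0^1(\Omega)}^2\lesssim\a(u,u)+\norm{u}_{L^2(\Omega)}^2$ and $\a(u,u)\lesssim\norm{u}_{H_0^1(\Omega)}^2$, with Poincar\'e giving the lower bound and a Sobolev-type estimate controlling $\int_\Omega|u|^2V\,dx$, and closedness then follows from completeness of $H_0^1(\Omega)$. The one difference worth noting is which estimate you make primary. Your main route bounds the potential term directly by H\"older with $V\in L^n(\Omega)$ and $H_0^1(\Omega)\hookrightarrow L^{2n/(n-2)}(\Omega)$; the paper instead deliberately factors through the embedding $L^n(\Omega)\hookrightarrow W^{-1,\infty}(\Omega)$ and estimates $\norm{|u|^2}_{W_0^{1,1}(\Omega)}\lesssim\norm{u\grad u}_{L^1(\Omega)}$, i.e.\ exactly the ``equivalent route'' you mention in passing. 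The reason the paper prefers that formulation is not taste: a later remark in \Cref{op} points out that the lemma, \Cref{cptop} and \Cref{compact} only use \eqref{w-1inf}, so they remain valid when $V$ is merely a non-negative measure in $W^{-1,\infty}(\Omega)$. Your direct H\"older argument proves the stated lemma just as well, but it uses $V\in L^n(\Omega)$ in an essential way and would not support that generalization; if you want your write-up to slot into the paper's later remark, promote your secondary route to the primary one.
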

We state some properties of $\A$ required to show the main results.
\begin{lem}\label{cptop}
$0\not\in\spec(\A)$ and $\norm{\A^{-1}}_{L^2(\Omega)\to H_0^1(\Omega)}<\infty.$
\begin{proof}
For any $f\in L^2(\Omega)\hookrightarrow H^{-1}(\Omega),$ Riesz representation theorem gives unique $u\in\Dom(\a)$ such that $(\A u,v)_{L^2(\Omega)}=\a(u,v)=(f,v)_{L^2(\Omega)}$ for any $v\in\Dom(\a).$ Clearly, $u=\A^{-1}f.$ Poincar\'{e}'s inequality imply
$\norm{u}_{L^2(\Omega)}^2\lesssim \norm{\grad u}_{L^2(\Omega)}^2\leq \a(u,u)$
$\leq \norm{f}_{L^2(\Omega)}\norm{u}_{L^2(\Omega)}.$ Therefore, we have $\norm{u}_{L^2(\Omega)}\lesssim \norm{f}_{L^2(\Omega)}.$ These inequalities imply $\norm{\grad u}_{L^2(\Omega)}\lesssim\norm{f}_{L^2(\Omega)}.$ Therefore, we have the assertion.
\end{proof}
\end{lem}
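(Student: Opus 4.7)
The plan is to invert $\a$ using the Riesz representation theorem, which simultaneously gives $\A^{-1}$ as a bounded map $L^2(\Omega)\to H_0^1(\Omega)$ and rules out $0\in\spec(\A)$.

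By the preceding lemma, $\a$ is an inner product on $H_0^1(\Omega)$ whose associated norm is equivalent to the standard $H_0^1$ norm, so $(H_0^1(\Omega),\a)$ is a Hilbert space. For any $f\in L^2(\Omega)$, the antilinear functional $v\mapsto(f,v)_{L^2(\Omega)}$ on this Hilbert space is continuous (by Cauchy--Schwarz and Poincar\'e), so Riesz produces a unique $u\in H_0^1(\Omega)$ satisfying $\a(u,v)=(f,v)_{L^2(\Omega)}$ for every $v\in H_0^1(\Omega)$. The first representation theorem for closed symmetric forms then identifies $u$ with $\A^{-1}f$: concretely $u\in\Dom(\A)$ and $\A u=f$, so $\A$ is a bijection from $\Dom(\A)$ to $L^2(\Omega)$ and in particular $0\notin\spec(\A)$.

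For the quantitative bound I would use that $V\geq0$, so $\a(u,u)\geq\norm{\grad u}_{L^2(\Omega)}^2$, combined with Poincar\'e's inequality $\norm{u}_{L^2(\Omega)}\lesssim\norm{\grad u}_{L^2(\Omega)}$. Testing the identity $\a(u,u)=(f,u)_{L^2(\Omega)}$ against $u$ itself and applying Cauchy--Schwarz yields
\[\norm{\grad u}_{L^2(\Omega)}^2\leq\a(u,u)\leq\norm{f}_{L^2(\Omega)}\norm{u}_{L^2(\Omega)}\lesssim\norm{f}_{L^2(\Omega)}\norm{\grad u}_{L^2(\Omega)},\]
so $\norm{u}_{H_0^1(\Omega)}\lesssim\norm{f}_{L^2(\Omega)}$, which is exactly $\norm{\A^{-1}}_{L^2(\Omega)\to H_0^1(\Omega)}<\infty$.

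I don't expect a real obstacle: this is the standard Lax--Milgram/Riesz inversion for a coercive form, with coercivity supplied for free by the sign condition $V\geq0$. The only thing worth flagging is that the argument relies on the previous lemma (equivalence of $\a$ with the $H_0^1$ inner product) rather than reproving coercivity from scratch; without the nonnegativity of $V$ one would have to work with $\A+\lambda_0$ for large $\lambda_0$ and then argue separately that $0$ is not an eigenvalue.
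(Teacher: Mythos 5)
Your proposal is correct and follows essentially the same route as the paper: Riesz representation on the form Hilbert space (using the previous lemma's norm equivalence) to solve $\a(u,v)=(f,v)_{L^2(\Omega)}$, identification $u=\A^{-1}f$, and then the coercivity chain $\norm{\grad u}_{L^2(\Omega)}^2\leq\a(u,u)\leq\norm{f}_{L^2(\Omega)}\norm{u}_{L^2(\Omega)}$ combined with Poincar\'e's inequality to get $\norm{u}_{H_0^1(\Omega)}\lesssim\norm{f}_{L^2(\Omega)}$. The only difference is cosmetic ordering of the estimates, so no changes are needed.
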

\begin{cor}\label{compact}
$\A^{-1}\colon L^2(\Omega)\to L^2(\Omega)$ is compact.
\begin{proof}
Let $f\to 0$ weakly in $L^2(\Omega).$ \Cref{cptop} implies $\A^{-1}f\to 0$ weakly in $H_0^1(\Omega).$ Rellich's theorem implies $\A^{-1}f\to 0$ in $L^2(\Omega).$ Therefore, the assertion follows.
\end{proof}
\end{cor}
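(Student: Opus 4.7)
The plan is to factor $\A^{-1}\colon L^2(\Omega)\to L^2(\Omega)$ through $H_0^1(\Omega)$. Concretely, \Cref{cptop} already furnishes $\A^{-1}$ as a bounded operator $L^2(\Omega)\to H_0^1(\Omega)$, so it remains only to compose with the inclusion $H_0^1(\Omega)\hookrightarrow L^2(\Omega)$, which is compact by Rellich's theorem (applicable since $\Omega$ is bounded with $C^2$ boundary, as assumed in \Cref{secass}). The composition of a bounded operator with a compact one is compact, and the conclusion follows.

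Alternatively, one can give the same argument in sequential language: take any sequence $f_k$ converging weakly to $0$ in $L^2(\Omega)$. It is norm-bounded by the uniform boundedness principle, so by \Cref{cptop} the sequence $\A^{-1}f_k$ is bounded in $H_0^1(\Omega)$. Extracting a subsequence weakly convergent in $H_0^1(\Omega)$ and applying Rellich yields strong $L^2$-convergence of this subsequence. The limit is identified as zero by testing against $\varphi\in C_c^\infty(\Omega)$ and using that $\A^{-1}$ is self-adjoint on $L^2(\Omega)$: $(\A^{-1}f_k,\varphi)_{L^2(\Omega)}=(f_k,\A^{-1}\varphi)_{L^2(\Omega)}\to 0$. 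A subsequence-of-every-subsequence argument then promotes the conclusion to the full sequence.

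No real obstacle is anticipated here; the corollary is essentially a repackaging of \Cref{cptop} together with Rellich's compactness theorem, and the only mildly non-trivial point is the identification of the weak limit, which is routine.
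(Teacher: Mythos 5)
Your proposal is correct and follows essentially the same route as the paper: boundedness of $\A^{-1}\colon L^2(\Omega)\to H_0^1(\Omega)$ from \Cref{cptop} combined with Rellich's compactness theorem (the paper phrases it sequentially, mapping a weakly null sequence to a weakly null sequence in $H_0^1(\Omega)$ and then applying Rellich, which is just the sequential form of your factorization). No issues.
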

\begin{rem}
We showed these results using \eqref{w-1inf} instead of $V\in L^n(\Omega)$ directly. Indeed, these results are still true if definition of $\a$ is replaced with
\[\a(u,v)=(\grad u,\grad v)_{L^2(\Omega)}+\int_\Omega u\cls{v}dV~(u,v\in\Dom(\a)=H_0^1(\Omega))\] where $V\geq0$ is a measure on $\Omega$ such that $V\in W^{-1,\infty}(\Omega).$
However, we have an advantage to assume $V\in L^n(\Omega)$ as below.
\end{rem}
\begin{lem}\label{ellregLd}
We have
$\Dom(\A)=H_0^1(\Omega)\cap H^2(\Omega)$ and there exists $c,c'>0$ such that $\norm{u}_{H^2(\Omega)}\leq c\norm{\A u}_{L^2(\Omega)}\leq c'\norm{(\A+1)u}_{L^2(\Omega)}$ for any $u\in\Dom(\A).$
\begin{proof}
Let $-\la_D$ be Dirichlet Laplacian on $L^2(\Omega)$ ($\A$ with $V=0$).
We have
\begin{equation}\label{ellreg}
\norm{u}_{H^2(\Omega)}\lesssim\norm{\la u}_{L^2(\Omega)}\mbox{ for } u\in\Dom(-\la_D)=H_0^1(\Omega)\cap H^2(\Omega)
\end{equation}
by elliptic regularity (e.g. \cite[Theorem 8.12.]{GT}).
Since $H_0^1(\Omega)\hookrightarrow L^{(\frac{1}{2}-\frac{1}{n})^{-1}}(\Omega),$ we have $\norm{Vu}_{L^2(\Omega)}
\leq\norm{V}_{L^n(\Omega)}\norm{u}_{L^{(\frac{1}{2}-\frac{1}{n})^{-1}}(\Omega)}
\lesssim\norm{u}_{H_0^1(\Omega)}$ for any $u\in H_0^1(\Omega).$ Therefore, we have
$\Dom(\A)=\{u\in H_0^1(\Omega)\mid-\la u+Vu\in L^2(\Omega)\}
=\{u\in H_0^1(\Omega)\mid\la u\in L^2(\Omega)\}
=\Dom(-\la_D)=H_0^1(\Omega)\cap H^2(\Omega).$
Let $u\in\Dom(\A).$ \Cref{cptop}, \eqref{ellreg} and $H_0^1(\Omega)\hookrightarrow L^{(\frac{1}{2}-\frac{1}{n})^{-1}}(\Omega)$ imply $\norm{u}_{H^2(\Omega)}$
$\lesssim\norm{\A u-V\A^{-1}\A u}_{L^2(\Omega)}$
$\lesssim\norm{\A u}_{L^2(\Omega)}
\leq\norm{(\A+1)u}_{L^2(\Omega)}+\norm{(\A+1)^{-1}(\A+1)u}_{L^2(\Omega)}$
$\lesssim\norm{(A+1)u}_{L^2(\Omega)}.$
\end{proof}
\end{lem}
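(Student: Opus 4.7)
The plan is to reduce the statement to elliptic regularity for the pure Dirichlet Laplacian $-\la_D$, treating $V$ as a relatively bounded perturbation. The guiding observation is that $V\in L^n(\Omega)$ together with the Sobolev embedding $H_0^1(\Omega)\hookrightarrow L^{2n/(n-2)}(\Omega)$ and H\"{o}lder's inequality makes multiplication by $V$ a bounded map $H_0^1(\Omega)\to L^2(\Omega)$, which is one derivative smoother than what the form $\a$ a priori requires.

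For the domain identification, I would use the abstract characterization that $u\in\Dom(\A)$ iff $u\in H_0^1(\Omega)$ and there exists $f\in L^2(\Omega)$ with $\a(u,v)=(f,v)_{L^2(\Omega)}$ for every $v\in H_0^1(\Omega)$, i.e.\ iff $-\la u+Vu\in L^2(\Omega)$ distributionally. Since the bound above yields $Vu\in L^2(\Omega)$ whenever $u\in H_0^1(\Omega)$, this condition collapses to $\la u\in L^2(\Omega)$, i.e.\ to $u\in\Dom(-\la_D)$. Standard $H^2$ elliptic regularity on the bounded $C^2$ domain $\Omega$ (e.g.\ \cite[Theorem 8.12]{GT}) then gives $\Dom(-\la_D)=H_0^1(\Omega)\cap H^2(\Omega)$ together with $\norm{u}_{H^2(\Omega)}\lesssim\norm{\la u}_{L^2(\Omega)}$, and the identification $\Dom(\A)=\Dom(-\la_D)$ follows.

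For the two-sided estimate I would chain three bounds. First, elliptic regularity gives $\norm{u}_{H^2(\Omega)}\lesssim\norm{\la u}_{L^2(\Omega)}\leq\norm{\A u}_{L^2(\Omega)}+\norm{Vu}_{L^2(\Omega)}$. Second, the Sobolev/H\"{o}lder estimate combined with \Cref{cptop} applied to $u=\A^{-1}(\A u)$ yields $\norm{Vu}_{L^2(\Omega)}\lesssim\norm{u}_{H_0^1(\Omega)}\lesssim\norm{\A u}_{L^2(\Omega)}$, which absorbs the perturbation and establishes the first inequality. Third, writing $\A u=(\A+1)u-u$ and using $V\geq 0$, hence $\A\geq 0$ and $\norm{(\A+1)^{-1}}_{L^2\to L^2}\leq 1$, gives $\norm{\A u}_{L^2(\Omega)}\leq 2\norm{(\A+1)u}_{L^2(\Omega)}$, which is the second inequality.

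I do not anticipate a serious obstacle here; the only delicate point is matching exponents so that multiplication by $V$ actually gains a derivative, which is exactly what the hypothesis $V\in L^n(\Omega)$ is tailored for, and which explains why the remark preceding the lemma singles out this hypothesis as a genuine strengthening over $V\in W^{-1,\infty}(\Omega)$. Everything else is routine perturbative elliptic theory combined with \Cref{cptop}.
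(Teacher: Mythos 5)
Your proposal is correct and follows essentially the same route as the paper: identify $\Dom(\A)$ with $\Dom(-\la_D)=H_0^1(\Omega)\cap H^2(\Omega)$ via the bound $\norm{Vu}_{L^2(\Omega)}\lesssim\norm{u}_{H_0^1(\Omega)}$ coming from $V\in L^n(\Omega)$, H\"older and the Sobolev embedding, then chain elliptic regularity for $-\la_D$, \Cref{cptop} applied to $u=\A^{-1}\A u$ to absorb the perturbation, and $\norm{(\A+1)^{-1}}_{L^2\to L^2}\leq1$ for the final inequality. No gaps.
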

\section{Convergence rate}\label{secconvrate}
In this section, we show \Cref{convrate}. We denote $\sum_i=\sum_{i\in\Lambda^\ep}$ in this paper. Similar for $\bigcup_i$ and $\sup_i.$
\begin{lem}\label{wirt}
Let $1\leq q<\infty.$
Then, there exists $C>0$ such that
\[\norm{u-\ui}_{L^q(\sqi)}\leq C\ep\norm{\grad u}_{L^q(\sqi)}\]
for any $u\in W^{1,q}(\sqi),~i\in\Z^n$ and $\ep>0.$
\begin{proof}
Let $v(x)=u(\ep(x+i)).$
The assertion for $i=0,~\ep=1$ is known as Poincar\'{e}-Wirtinger inequality. Therefore, we have
$\norm{v-v_{01}}_{L^q(\sq)}^q\leq C^q\norm{v}_{L^q(\sq)}^q.$
Changing the variable, we have $v_{01}=\ui,~\norm{v-v_{01}}_{L^q(\sq)}^q
=\ep^{-n}\norm{u-\ui}_{L^q(\sqi)}^q$ and $\norm{\grad v}_{L^q(\sq)}^q
=\ep^{q-n}\norm{\grad v}_{L^q(\sqi)}^q.$
Therefore, the assertion follows.
\end{proof}
\end{lem}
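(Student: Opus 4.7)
The plan is to reduce the statement to the standard Poincaré--Wirtinger inequality on the unit cube by a scaling and translation argument. Set $v(x)=u(\ep(x+i))$, which belongs to $W^{1,q}(\sq)$. Since $\sq$ is a fixed Lipschitz (in fact convex) bounded domain, the classical Poincaré--Wirtinger inequality yields a constant $C>0$, independent of $u$, $i$, and $\ep$, such that
\[\norm{v-v_{01}}_{L^q(\sq)}\leq C\norm{\grad v}_{L^q(\sq)},\]
where $v_{01}$ denotes the mean of $v$ over $\sq$.

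Next I would undo the change of variables. A direct computation with the substitution $y=\ep(x+i)$ gives $v_{01}=\ui$, together with the Jacobian factors $\norm{v-v_{01}}_{L^q(\sq)}^q=\ep^{-n}\norm{u-\ui}_{L^q(\sqi)}^q$ and, using the chain rule $\grad v(x)=\ep(\grad u)(\ep(x+i))$, $\norm{\grad v}_{L^q(\sq)}^q=\ep^{q-n}\norm{\grad u}_{L^q(\sqi)}^q$. Substituting these into the unit-cube inequality and taking $q$-th roots yields the desired bound with the same constant $C$.

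There is really no obstacle here beyond keeping the scaling factors straight; the whole content of the lemma is that the Poincaré--Wirtinger constant on the unit cube scales linearly with the side length $\ep$, independently of the translation parameter $i\in\Z^n$. The only mild point worth checking is that one may legitimately apply the unit-cube inequality to $v\in W^{1,q}(\sq)$ for all $1\leq q<\infty$, but this is standard since $\sq$ is a Lipschitz domain (indeed convex, so the optimal constant can even be tracked explicitly). Hence the constant $C$ depends only on $n$ and $q$, and not on $i$ or $\ep$.
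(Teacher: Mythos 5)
Your proof is correct and follows essentially the same route as the paper: rescale and translate to the unit cube, apply the classical Poincar\'{e}--Wirtinger inequality there, and undo the change of variables with the Jacobian and chain-rule factors $\ep^{-n}$ and $\ep^{q-n}$. (In fact you state the scaling identities more carefully than the paper's own proof, which contains minor typos in the right-hand sides.)
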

We denote
$1_E(x)=\begin{cases}
1&(x\in E)\\
0&(x\notin E)\end{cases}$ for $E\subset\R^n,~x\in\R^n.$
\begin{proof}[Proof of \eqref{ldc}]
The first case follows from \Cref{wirt}. Let $V\in C^{0,\alpha}(\Omega).$ Integrating
$\abs{\Vi-V(x)}1_{\sqi}(x)
\leq(\int_{\sqi}|V(y)-V(x)|dy)/|\sqi|
\leq|V|_{C^{0,\alpha}}(\diam\sq_{0\ep})^\alpha,$
we have the second case:
\begin{equation}\label{ldclip}
D_\ep
\leq|V|_{C^{0,\alpha}}(\diam\square)^\alpha|\Omega|^{1/n}\ep^\alpha.\end{equation}
For $V\in L^n(\Omega),$ take $\{f_\delta\}_{\delta>0}\subset C^{0,1}(\Omega)$ such that $\lim_{\delta\to0}\norm{f_\delta-V}_{L^n(\Omega)}=0.$

Integrating $\abs{(V-f_\delta)_{i\ep}}
\leq|\sqi|^{-1/n}\norm{V-f_\delta}_{L^n(\sqi)},$
we have \[\sum_i\norm{(V-f_\delta)_{i\ep}}_{L^n(\sqi)}^n
\leq\norm{V-f_\delta}_{L^n(\Omega)}^n.\]
It and \eqref{ldclip} with $\alpha=1$ imply
$D_\ep\leq2\norm{V-f_\delta}_{L^n(\Omega)}+|f_\delta|_{C^{0,1}}\diam\square|\Omega|^{1/n}\ep$ for each $\ep>0$ and $\delta>0.$
Therefore we have $\limsup_{\ep\to 0}D_\ep=0.$
\end{proof}
We can not clarify convergence rate for $V\in L^n(\Omega)$ generally. We introduce a example as below.
\begin{ex}
$V=2\cross1_{[0,\infty]\cross\R^{n-1}}$ satisfy
$D_\ep=\O(\ep^{1/n}).$
\begin{proof}
Choose $R>0$ such that $[-R,R]^n\supset\Omega.$ Since
$\sum_i\Vi$
$=1_{(-\ep/2,\ep/2)\cross\R^{n-1}}$
$+2\cross 1_{(\ep/2,\infty)\cross\R^{n-1}},$ we have
$D_\ep\leq\norm{1}_{L^n((-\ep/2,-\ep/2)\cross[-R,R]^{n-1})}\leq((2R)^{n-1}\ep)^{1/n}.$
\end{proof}
\end{ex}
We need the lemma below to show \eqref{intsmalldiam}. It seems the lemma is very famous. But we show it for the sake.
\begin{lem}
Let $\mu$ be a Borel measure on $\R^n$ such that $\mu(\{x\})=0$ for any $x\in\R^n.$ Let $B\subset\R^n$ be a bounded Borel set such that $\mu(B)<\infty.$
Then, we have
\[\sup\{\mu(E)\mid E:\mbox{Borel set}\subset B,~\diam E<\ep\}\to0.\]
\begin{proof}
We show it only for $B\neq\emptyset.$
Take $E_\ep\subset B$ such that $\diam E_\ep<\ep,~E_\ep\neq\emptyset$ and
$\sup\{\mu(E)\mid E\neq\emptyset:\mbox{Borel set}\subset B,~\diam E<\ep\}-\ep<\mu(E_\ep).$
Let $t=\limsup_\ep\mu(E_\ep).$ Take a subsequence $\mu(E_{\ep_m})\to t~(\ep_m\to 0)$ such that there exists $x_m\in E_{\ep_m}$ converging to some $x\in\cls{B}$ and $|x-x_m|+\diam E_{\ep_m}<1/m.$
Since $E_{\ep_m}\subset B(x,1/m),$ we have $t=\lim_m\mu(E_{\ep_m})\leq\lim_m\mu(B(x,1/m))=\mu(\{x\})=0.$
\end{proof}
\end{lem}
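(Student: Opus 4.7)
The plan is to argue by contradiction and produce an atom of $\mu$ inside $\cls{B}$. Suppose the supremum does not tend to $0$; then there exist $\delta>0$, a sequence $\ep_k\to 0$, and Borel sets $E_k\subset B$ with $\diam E_k<\ep_k$ and $\mu(E_k)\geq\delta$. I pick any point $x_k\in E_k$. Because the $E_k$ all lie in the bounded set $B$, the sequence $(x_k)$ is bounded, so Bolzano-Weierstrass produces a subsequence (which I continue to index by $k$) converging to some $x\in\cls{B}$.

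For every fixed $r>0$ and all sufficiently large $k$ I have $|x-x_k|+\diam E_k<r$, so $E_k\subset B(x,r)$, and therefore $\mu(B\cap B(x,r))\geq\mu(E_k)\geq\delta$. Sending $r\to 0$ and applying continuity of $\mu$ from above yields
\[
\mu(\{x\})=\lim_{r\to 0}\mu(B\cap B(x,r))\geq\delta,
\]
which contradicts the hypothesis $\mu(\{x\})=0$ and completes the argument.

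The only delicate point is the appeal to continuity from above: this is only valid on decreasing sequences whose first (hence all) terms have finite measure, which is precisely where the hypothesis $\mu(B)<\infty$ enters via $\mu(B\cap B(x,r_0))\leq\mu(B)<\infty$ for some fixed $r_0>0$. Once that is in place the rest is routine compactness in $\R^n$ plus the defining property of non-atomic measures, and I do not anticipate any further obstacle.
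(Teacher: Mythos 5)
Your argument is correct and follows essentially the same route as the paper: choose points $x_k$ in small sets whose measures do not vanish, extract a convergent subsequence using boundedness of $B$, and conclude from $\mu(\{x\})=0$ via shrinking balls; the paper phrases this with a $\limsup$ over near-optimal sets instead of a contradiction, but the mechanism is identical. Your extra care in intersecting with $B$ before invoking continuity from above is in fact slightly tidier than the paper's step $\lim_m\mu(B(x,1/m))=\mu(\{x\})$, which tacitly uses finiteness of $\mu$ on some ball.
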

\begin{proof}[Proof of \eqref{intsmalldiam}]
The assertion is shown by applying the lemma above for  $B=\Omega,~\mu(E)$
$=\int_{\Omega\cap E}|V|^pdx$ for Borel set $E$ of $\R^n.$
\end{proof}

\section{Proof of the main result}\label{secproof}
In this section, we show \Cref{deltacl} and \Cref{specconv}.
We take $\chi\in C^\infty(\R,[0,1])$ such that
$\chi(x)=\begin{cases}
1&(x<1)\\
0&(x>2)
\end{cases}.$
Let 
\[\hchi(x)=\chi\qty(\frac{2/\kappa(|x-\xI|-\di)}{\ep}),~\chii(x)=\chi\qty(\frac{|x-\xI|}{\di})\]
(see also \cite[Fig. 3.]{CICE}) and define $J_\ep^1\colon\Dom(\a)\to\Dom(\a_\ep)$ by 
\[\PI f=(f-\Fi)\chii,~\QI f=\Fi \Hi\hchi,~J_\ep^1f=f-\sum_i(\PI+\QI)f~(f\in\Dom(\a)).\]
Let $J^{1'}_\ep\colon\Dom(\a_\ep)
=H_0^1(\Omega_\ep)\to\Dom(\a)=H_0^1(\Omega)$ be the zero extension operator.
Now we show \Cref{deltacl} under our settings.
\eqref{c1b},\eqref{c2},\eqref{c3a},\eqref{c3b},\eqref{c4a} with $\delta_\ep=0$ for our situation follows as noted in \cite[page 151]{CICE}. Therefore, it is enough to verify \eqref{c1a},\eqref{c4a} and \eqref{c5}.

We denote  $a_i^\ep(f,u)\lesssim b_i^\ep(f,u)$ if there exists $C>0$ not depending on $i\in\Lambda_\ep,\ep\ll1,u$ and $f$ such that $a_i^\ep(f,u)\leq C b_i^\ep(f,u)$ for $a_i^\ep(f,u),~b_i^\ep(f,u)\in\R.$ We regard $1/p=0$ if $p=\infty.$
\begin{lem}[{\cite[Lemma 2.1]{CICE},\cite[Lemma 2.4]{KM}}]\label{estpot}
There exists $c'>0$ such that
\[
|\pd^\alpha\Hi(x)|
\leq c'\di^{n-2}(|x-\xI|-\di)^{-n+2-|\alpha|}~(|x-\xI|-\di\geq c'\di)\]
for each $\ep>0,~i\in\Lambda^\ep$ and $|\alpha|\leq 1.$
\end{lem}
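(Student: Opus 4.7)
The plan is to dominate $\Hi$ pointwise by the capacity potential of the circumscribing ball $\overline{B(\xI,\di)}$, which has the explicit form $(\di/|y-\xI|)^{n-2}$, and then to recover the gradient estimate by an interior gradient bound for harmonic functions. The argument is purely potential-theoretic and scaling-invariant, so uniformity in $i\in\Lambda^\ep$ and $\ep\ll1$ will be automatic.

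For $|\alpha|=0$, I would proceed by two applications of the maximum principle. First, since $\Hi$ is harmonic on $\R^n\setminus\Ki$ by \eqref{harmonic}, equal to $1$ on $\Ki$, and vanishing at infinity, the maximum principle gives $0\leq\Hi\leq 1$ on $\R^n$. Next, since $\Ki\subset\overline{B(\xI,\di)}$, both $\Hi$ and $h(y):=(\di/|y-\xI|)^{n-2}$ are harmonic on $\R^n\setminus\overline{B(\xI,\di)}$, vanish at infinity, and satisfy $\Hi\leq 1=h$ on $\pd B(\xI,\di)$; applying the maximum principle to $\Hi-h$ on the exterior yields $\Hi(y)\leq(\di/|y-\xI|)^{n-2}$ whenever $|y-\xI|\geq\di$. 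Under the hypothesis $|x-\xI|-\di\geq c_0\di$ one has $|x-\xI|\leq\frac{c_0+1}{c_0}(|x-\xI|-\di)$, so
\[\Hi(x)\leq\left(\frac{c_0+1}{c_0}\right)^{n-2}\di^{n-2}(|x-\xI|-\di)^{-(n-2)},\]
which is the bound for $|\alpha|=0$.

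For $|\alpha|=1$, I would use the classical Cauchy-type gradient estimate $|\grad u(x)|\leq(n/\rho)\sup_{B(x,\rho)}|u|$ for $u$ harmonic on $B(x,\rho)$. Setting $r:=|x-\xI|-\di$ and $\rho:=r/2$, every $y\in B(x,\rho)$ satisfies $|y-\xI|-\di\geq r/2$, so $\Hi$ is harmonic on $B(x,\rho)$, and the $|\alpha|=0$ bound holds at each such $y$ provided $r/2\geq c_0\di$, i.e.\ after enforcing $c'\geq 2c_0$. This yields $\sup_{B(x,\rho)}\Hi\leq C\di^{n-2}r^{-(n-2)}$, and the gradient estimate then gives $|\grad\Hi(x)|\leq 2nC\di^{n-2}r^{-(n-1)}$, as required.

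The only real bookkeeping issue, and the closest thing to an obstacle, is that the single constant $c'$ plays a dual role, appearing both as the hypothesis threshold $|x-\xI|-\di\geq c'\di$ and inside the constant on the right-hand side. This is resolved by first fixing $c_0$ conveniently at step one, then taking $c'$ at step two to be at least $2c_0$ and finally enlarging it once more so that the combined multiplicative constants from both steps can be absorbed into $c'$ itself.
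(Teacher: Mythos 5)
Your argument is correct. Note that the paper itself does not prove this lemma at all -- it simply imports it from the cited references -- so your self-contained potential-theoretic proof is a genuine addition rather than a restatement; it is, however, the standard route (and essentially what the cited lemmas do): compare $\Hi$ with the explicit equilibrium potential $(\di/|y-\xI|)^{n-2}$ of the circumscribed ball via the maximum principle on the exterior domain (letting the radius of a large ball tend to infinity to handle unboundedness), then upgrade to the first derivatives by the interior gradient estimate for harmonic functions on $B(x,(|x-\xI|-\di)/2)$, which is disjoint from $\Ki$. The scaling structure indeed makes all constants depend only on $n$, so uniformity in $i$ and $\ep$ is automatic, and your handling of the dual role of $c'$ is fine. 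One small slip worth fixing in the $|\alpha|=0$ step: the inequality $|x-\xI|\leq\frac{c_0+1}{c_0}(|x-\xI|-\di)$ bounds $|x-\xI|^{-(n-2)}$ from \emph{below}, not above, so it does not by itself yield the displayed estimate; what you actually need is the trivial bound $|x-\xI|\geq|x-\xI|-\di$, which gives $\Hi(x)\leq\di^{n-2}(|x-\xI|-\di)^{-(n-2)}$ with constant $1$ (and no threshold hypothesis is even needed for $|\alpha|=0$). Since your claimed constant $\bigl(\frac{c_0+1}{c_0}\bigr)^{n-2}\geq1$, the conclusion you state is still true, so this is a cosmetic correction rather than a gap.
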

\begin{proof}[Proof of \eqref{c1a}]
It is enough to show 
\[\norm{J_\ep f-J_\ep^1f}_{L^2(\Omega_\ep)}
\lesssim(\ep+b_\ep e_\ep)\norm{ f}_{H^1(\Omega)}~(f\in H_0^1(\Omega))\]
since $\ep=\ep^{1-n/p}\ep^{n/p}\leq e_\ep b_\ep$ and
\begin{equation}\label{equiv}
\norm{f}_{H^1(\Omega)}^2
\leq\a(f,f)+(f,f)_{L^2(\Omega)}.
\end{equation}

It is enough to show $\sum_i\norm{\QI f}_{L^2(\sqi)}^2\lesssim(b_\ep e_\ep)^2\norm{f}_{H^1(\Omega)}^2$ by \cite[(28),~(29)]{CICE}.

Applying H\"{o}lder's inequality for \eqref{asscap} and \eqref{assrad}, we have
\begin{equation}\label{ordcap}
\di^{n-2},~\capa(\Ki)\lesssim b_\ep'\ep^{n-n/p}.
\end{equation}
Integrating \Cref{estpot} on $\{x\in\R^n\mid\kappa\ep/2\leq|x-\xI|-\di\leq\kappa\ep\}\supset\supp\grad\hchi$, and using \eqref{ordcap}, we have
\begin{equation}\label{estpotint}
\norm{\Hi}_{L^2(\supp\grad\hchi)}^2
\lesssim\di^{2(n-2)}\ep^{-n+4}
\lesssim (b_\ep')^2
\ep^{n+4-2n/p}.
\end{equation}
Schwarz's inequality implies
\begin{equation}\label{schmean}
|\Fi|\leq\ep^{-n/2}\norm{f}_{L^2(\sqi)}.
\end{equation}
\cite[(30)]{CICE}, \eqref{schmean}, \eqref{ordcap},~\eqref{estpotint} and $(b_\ep')^2\leq b_\ep'\leq b_\ep^2$ imply
\begin{align*}
\sum_i\norm{\QI f}_{L^2(\sqi)}^2
&\lesssim \norm{f}_{L^2(\Omega)}^2\ep^{2-n}\sup_i\qty(\capa(\Ki)+\ep^{-2}\norm{\Hi}_{L^2(\supp\grad\hchi)}^2)\\
&\lesssim\norm{f}_{L^2(\Omega)}^2b_\ep^2(\ep^{2-n/p}+\ep^{4-2n/p})
\lesssim\norm{f}_{H^1(\Omega)}^2(b_\ep e_\ep)^2.
\end{align*}
\end{proof}
\begin{proof}[Proof of \eqref{c4a}]
It is enough to show
\[\norm{f}_{L^2(\bigcup_i\Ki)}
\lesssim b_\ep e_\ep\norm{f}_{H^1(\Omega)}~(f\in H_0^1(\Omega))\]
since \eqref{equiv}.
\eqref{ordcap} implies
\begin{equation}\label{ratedep}
\di/\ep\lesssim b_\ep^2\ep^\frac{2-n/p}{n-2}
\end{equation}
and
\begin{equation}\label{spseq}
(\di/\ep)^n+\di\ep\lesssim b_\ep^2e_\ep^2.
\end{equation}
\cite[Lemma 4.2.]{CICE} (shown in \cite[Lemma 4.9.,Remark. 4.2]{KM}) with $D=D_2=\sqi,~D_1=\cls{B(\xI,2\di)}$ and \eqref{spseq}, we have
\begin{equation}\label{nearlyc4a}
\norm{f}_{L^2(\bigcup_i\cls{B(\xI,2\di)})}^2
\lesssim(b_\ep e_\ep)^2\norm{\grad f}_{L^2(\Omega)}^2
\end{equation}
which implies the assertion.
\end{proof}
\begin{proof}[Proof of \eqref{c5}]
It is enough to show
\begin{align*}
&\abs{\a_\ep(J_\ep^1f,u)-\a(f,J_\ep^{1'} u)}\\
\lesssim&(D_\ep+b_\ep e_\ep)\norm{f}_{H^2(\Omega)}\norm{u}_{H_0^1(\Omega_\ep)}~(f\in H_0^1(\Omega)\cap H^2(\Omega),~u\in H_0^1(\Omega_\ep))
\end{align*}
since \Cref{ellregLd} and $\norm{u}_{H_0^1(\Omega_\ep)}^2\leq\a_\ep(u,u)+(u,u)_{L^2(\Omega_\ep)}.$
We denote $\Yi=\sqi\setminus\Ki$ and $T_\ep=\Omega_\ep\setminus\cls{\bigcup_i\Yi}.$
Similarly for \cite[4.3.]{CICE}, we have
\begin{align*}
&\abs{\a_\ep(J_\ep^1f,u)-\a(f,J_\ep^{1'} u)}\\
\leq&\abs{\sum_i(\grad u,\grad\PI f)_{L^2(\Yi)}}+\abs{\sum_i\qty((\grad u,\grad\QI f)_{L^2(\Yi)}+\int_{\sqi}Vf\cls{u}dx)}+\abs{\int_{T_\ep}Vf\cls{u}dx}\\
\eqqcolon&\J_{\ep,1}+\J_{\ep,2}+\J_{\ep,3}.
\end{align*}
\cite[Lemma 4.7]{CICE}, $H_0^1(\Omega)\hookrightarrow L^{(\frac{1}{2}-\frac{1}{n})^{-1}}$ and H\"{o}lder's inequality for $\frac{1}{n}+(\frac{1}{2}-\frac{1}{n})+\frac{1}{2}=1$ imply
\[\J_{\ep,3}\lesssim\ep\norm{f}_{H_0^1(\Omega)}\norm{u}_{H_0^1(\Omega_\ep)}
\leq b_\ep e_\ep\norm{f}_{H^2(\Omega)}\norm{u}_{H_0^1(\Omega_\ep)}.\]
\eqref{ratedep} and similarly for proof of \cite[(46)]{CICE}, we have
\begin{equation}\label{j1pre}
\sqrt{\sum_i\norm{(f-\Fi)\chii}_{L^2(\Yi)}^2}
\lesssim b_\ep^{2\gamma_\ep}e_\ep\norm{f}_{H^2(\Omega)}
\lesssim b_\ep e_\ep\norm{f}_{H^2(\Omega)}.
\end{equation}
Replacing $f$ of \eqref{nearlyc4a} with $ \pd_jf~(j\leq n)$ and applying $\supp\chii\subset\cls{B(\xI,2\di)},$ we have
\begin{equation}\label{j2pre}
\sum_i\norm{\grad f}_{L^2(\supp\chii)}^2\lesssim(b_\ep e_\ep)^2\norm{f}_{H^2(\Omega)}^2.
\end{equation}
\cite[(36)]{CICE}, \eqref{j1pre} and \eqref{j2pre} imply
\[\J_{\ep,1}
\lesssim b_\ep e_\ep\norm{u}_{H_0^1(\Omega_\ep)}
\norm{f}_{H^2(\Omega)}.\]
\cite[(52)]{CICE} implies
\begin{align*}
\J_{\ep,2}
&\leq\sum_i\abs{\Fi \cls{\ui}\capa(\Ki)-\int_{\sqi}Vf\cls{u}dx}+\sum_i|\Fi|\norm{\ui-u}_{L^2(\Yi)}\norm{\la(\Hi\hchi)}_{L^2(\Yi)}\\
&\eqqcolon I_1+I_2.
\end{align*}
Let $V^\ep=\sum_i\Vi1_{\sqi}.$
H\"{o}lder's inequality for $\frac{1}{n}+(\frac{1}{2}-\frac{1}{n})+\frac{1}{2}=1,$
$~\norm{1}_{L^2(\sqi)}=\ep^{n/2},~H_0^1(\Omega)\hookrightarrow L^{(1/2-1/n)^{-1}},~\ui\capa(\Ki)=\int_{\sqi}uV^\ep dx,$ \Cref{wirt} and \eqref{schmean} imply
\begin{align}\begin{split}\label{I1}
I_1
&\leq\sum_i\qty(|\Fi|\int_{\sqi}|u||V^\ep-V|dx+\int_{\sqi}|f-\Fi||u|Vdx)\\
&\lesssim(D_\ep+\ep)\norm{f}_{H_0^1(\Omega)}\norm{u}_{H_0^1(\Omega_\ep)}
\lesssim(D_\ep+b_\ep e_\ep)\norm{f}_{H^2(\Omega)}\norm{u}_{H_0^1(\Omega_\ep)}
.
\end{split}\end{align}
\Cref{estpot},~\eqref{ordcap} and \eqref{harmonic} imply
$|\la(\Hi\hchi)|1_{\Yi}\lesssim \di^{n-2}\ep^{-n}\lesssim b_\ep'\ep^{-n/p}.$
It, \Cref{wirt} and \eqref{schmean} imply
$I_2\lesssim b_\ep'\ep^{1-n/p}\norm{f}_{L^2(\Omega)}\norm{u}_{H_0^1(\Omega_\ep)}
\lesssim b_\ep e_\ep\norm{f}_{H^2(\Omega)}\norm{u}_{H_0^1(\Omega_\ep)}.$
\end{proof}
Therefore, \Cref{deltacl} is shown.
\begin{proof}[Proof of \Cref{specconv}]
We first show
\begin{equation}\label{a1}\tag{$A_1$}
\norm{J_\ep f}_{L^2(\Omega_\ep)}\to\norm{f}_{L^2(\Omega)}~(f\in L^2(\Omega)).
\end{equation}
\eqref{spseq} and $\sum_i1\leq|\Omega|/|\sq_{0\ep}|$ imply $\norm{1_{\cup_i\Ki}}_{L^1(\Omega)}
\lesssim\sup_i(\di/\ep)^n\to 0.$
Let $t_\ep=\norm{f}_{L^2(\bigcup_i\Ki)},~t=\limsup_\ep t_\ep$ and take a subsequence such that $t_{\ep_k}\to t$ and $1_{\bigcup_i K_{i\ep_k}}\to 0$ a.e. as $k\to\infty.$
Dominated convergence theorem implies $t=0.$ Therefore, we have $\abs{\norm{J_\ep f}_{L^2(\Omega_\ep)}-\norm{f}_{L^2(\Omega)}}$
$\leq t_\ep\to0.$  Therefore, we have \eqref{a1}.

The assertion is shown by \Cref{compact}, \eqref{a1} and the method in \cite[4.4. Proof of Theorem 2.7.]{CICE}.
\end{proof}
\section{Open problems}
Convergence rate for $V\in L^n(\Omega)$ is no more than $o(1)$ in this paper. Moreover, we can not clarify convergence rate for $V\in W^{-1,\infty}(\Omega)$ by the method of the proof of \eqref{c5} in this paper.
Convergence rate of $\A_\ep\to\A$ in norm resolvent sense could be improved by devising construction of holes.
\section{Appendix}\label{appsec}
We recall definition of $\delta_\ep-$closeness of forms. In this section, $\ep$ does not mean a index.
Let $\Hil$ and $\Hil_\ep$ be separable Hilbert spaces. Let $\a,~\a_\ep\geq 0$ be closed, densely defined sesquilinear forms in $\Hil$ and $\Hil_\ep$ respectively. Let $\A$ and $\A_\ep$ be self-adjoint operators associated with $\a$ and $\a_\ep$ respectively. Let
\[\Hil^1=\Dom(\a),~\norm{u}_{\Hil^1}=\sqrt{\a(u,u)+\norm{u}_\Hil^2}~(u\in\Hil^1)\]
and define $\Hil_\ep^1$ by $\a_\ep$ similarly.
Let $J_\ep\colon\Hil\to\Hil_\ep,~J_\ep'\colon\Hil_\ep\to\Hil,$ $J_\ep^1\colon\Hil^1\to\Hil_\ep^1$ and  $J_\ep^{1'}\colon\Hil_\ep^1\to\Hil^1$ be linear bounded operators. Let $\delta_\ep>0.$
\begin{defi}[{\cite[Definition 3.1]{CICE}}]\label{app}
We say $(\Hil,\a)$ and $(\Hil_\ep,a_\ep)$ are $\delta_\ep-$close of order $2$ respect to $J_\ep,~J_\ep',~J_\ep^1,~J_\ep^{1'}$ if the following conditions hold:
\begin{gather}
\label{c1a}\tag{$C_{1a}$}
\norm{J_\ep-J_\ep^1}_{\Hil^1\to\Hil_\ep}
\leq\delta_\ep,\\
\label{c1b}\tag{$C_{1b}$}
\norm{J_\ep'-J_\ep^{1'}}_{\Hil_\ep^1\to\Hil}
\leq\delta_\ep,\\
\label{c2}\tag{$C_2$}
\abs{(J_\ep f,u)_{\Hil_\ep}-(f,J_\ep'u)_\Hil}
\leq\delta_\ep\norm{f}_\Hil\norm{u}_{\Hil_\ep}
~(f\in\Hil,~u\in\Hil_\ep),\\
\label{c3a}\tag{$C_{3a}$}
\norm{J_\ep}_{\Hil\to\Hil_\ep}
\leq1+\delta_\ep,\\
\label{c3b}\tag{$C_{3b}$}
\norm{J_\ep'}_{\Hil_\ep\to\Hil}
\leq1+\delta_\ep,\\
\label{c4a}\tag{$C_{4a}$}
\norm{1-J_\ep'J_\ep}_{\Hil^1\to\Hil}
\leq\delta_\ep,\\
\label{c4b}\tag{$C_{4b}$}
\norm{1-J_\ep J_\ep'}_{\Hil_\ep^1\to\Hil_\ep}
\leq\delta_\ep,\\
\label{c5}\tag{$C_5$}
\abs{\a_\ep(J_\ep^1f,u)-\a(f,J_\ep^{1'}u)}
\leq\delta_\ep\norm{(\A+1)f}_{\Hil}\norm{u}_{\Hil_\ep^1}
~(f\in\Dom(\A),~u\in\Hil_\ep^1).
\end{gather}
\end{defi}
\bibliographystyle{plain}
\bibliography{Cite/CM,Cite/CICE,Cite/KM,Cite/INV,Cite/POST,Cite/GT}
\end{document}